\numberwithin{equation}{section}
\newtheorem{theorem}[equation]{Theorem}
\newtheorem{lemma}[equation]{Lemma}
\theoremstyle{definition}
\newcommand{\la}{\langle}
\newcommand{\ra}{\rangle}
\newcommand{\ot}{\otimes}
\newcommand{\lexp}[2]{{\vphantom{#2}}^{#1}{#2}}
\renewcommand{\k}{\Bbbk}
\newcommand{\ds}{\displaystyle}
\newcommand{\mH}{\mathcal{H}}
\newcommand{\kX}{\k \langle X \rangle}
\newcommand{\X}{\langle X \rangle}
\newcommand{\irr}{\text{irr}}
\DeclareMathOperator{\ddet}{det}
\begin{document}

\title[Quantum Drinfeld orbifold algebras]
{Quantum Drinfeld orbifold algebras}
\subjclass[2010]{16E40, 16S35}
\keywords{Quantum Drinfeld orbifold algebras, quantum symmetric algebra, tensor algebra, skew group algebra, PBW property, Diamond Lemma}
\author{Piyush Shroff}
\email{piyushilashroff@gmail.com}
\address{Department of Mathematics, Texas State University, San Marcos, Texas 78666, USA}

\begin{abstract}
Quantum Drinfeld orbifold algebras are the generalizations of Drinfeld orbifold algebras, which are obtained by replacing polynomial rings by quantum polynomial rings. In [6], the authors give necessary and sufficient conditions on a defining parameters to obtain Drinfeld orbifold algebras. In this article we generalize their result. It also simultaneously generalizes the result of [5] about quantum Drinfeld Hecke algebras.
\end{abstract}

\maketitle

\begin{section}{Introduction}

Drinfeld orbifold algebras arise in different settings, for example, as Lusztig's graded affine Hecke algebras, symplectic reflection algebras, and rational Cherednik algebras. These algebras are deformations of skew group algebra generated by a finite group $G$ which acts on a polynomial ring over some vector space $V$.

\indent In [6], Shepler and Witherspoon considered the quotient of the skew group algebra $T(V)\# G$ (defined below), where $T(V)$ is the tensor algebra and $G$ is a finite group acting by linear transformations on a finite dimensional vector space $V$ over a field $\k$. They defined the resulting algebra to be a Drinfeld orbifold algebra if it satisfies the Poincar\'{e}-Birkhoff-Witt (PBW) property (defined below). The quotient is a deformation of skew group algebra $S\# G$ (defined below) where $S$ is the symmetric algebra with the induced action of $G$ by automorphisms. These kind of algebras where studied by Halbout, Oudom, and Tang [3], where a finite group $G$ acts faithfully on real vector space $V$. In this article we replace symmetric algebra by quantum symmetric algebra and express the conditions on algebra parameters in algebraic format to satisfy PBW property. For examples, we refer reader to [4], [5] and [6].

Let $\k$ be a field, and let $V$ be a finite-dimensional vector space over $\k$.
Let $v_1,v_2,\ldots,v_n$ be a basis for $V$, and let
${\bf q}:= (q_{ij})_{1 \leq i,j \leq n}$ be a tuple of
nonzero scalars for which $q_{ii}=1$ and $q_{ji}=q_{ij}^{-1}$ for all $i,j$.

Let $S_{\bf q}(V)$ denote the {\bf quantum symmetric algebra}:
\[
 S_{\bf q}(V) := \k\la v_1,\ldots,v_n \mid v_iv_j = q_{ij}v_jv_i  \mbox{ for all }
      1\leq i,j\leq n \ra.
\]

Let $G$ be a finite group acting linearly on $V$, and that there is an induced action on $S_{\bf q}(V)$ by algebra automorphisms.Then we may form the skew group algebra $S_{\bf q}(V)\# G$: Letting $A=S_{\bf q}(V)$, additively $A\# G$ is the free left $A$-module with basis $G$. We write $A\# G=\oplus_{g\in G} A_g$, where $A_g=\{a\# g\mid a\in A\}$, that is for each $a\in A$ and $g\in G$ we denote $a\# g\in A_g$ the $a$-multiple of $g$. Multiplication on $A\# G$ is determined by
$$(a\# g)(b\# h):=a(\lexp{g}b)\# gh$$
for all $a, b\in A$ and $g, h\in G$. Similarly we define $T(V)\# G$ where $G$ acts by automorphisms on the tensor algebra $T(V)$.\\
\indent Let $\kappa: V\times V\rightarrow (\k \oplus V) \ot \k G$ be a bilinear map for which 
$\kappa(v_i,v_j) = - q_{ij}\kappa(v_j,v_i)$ for all $1\leq i,j\leq n$, and $\{t_g\mid g\in G\}$ be a basis of the group algebra $\k G$. Define  
\[
  \mH_{\mathbf{q}, \kappa} := T(V)\# G/(  v_iv_j - q_{ij} v_jv_i - \kappa(v_i,v_j) \mid
     1\leq i,j\leq n),
\]
the quotient of the skew group algebra 
$T(V)\# G$ by the ideal generated by all elements of the form specified.
Giving each $v_i$ degree 1 and each group element $g$  degree 0, $\mH_{\mathbf{q}, \kappa}$ is a
filtered algebra. 
We say that $\mH_{\mathbf{q}, \kappa}$ satisfies the {\bf PBW condition} if 
one of the following equivalent conditions holds:
\begin{enumerate}
\item The associated graded algebra of $\mH_{\mathbf{q}, \kappa}$ is isomorphic to $S_{\bf q}(V)\# G $, as graded algebras.
\item The set $\{v_1^{m_1}v_2^{m_2}\cdots v_n^{m_n} t_g \mid m_i \geq 0, g \in G\}$ 
is a $\k$-basis for $\mH_{\mathbf{q}, \kappa}$.
\end{enumerate}

We will call $\mH_{\mathbf{q}, \kappa}$ a {\bf quantum Drinfeld orbifold algebra} if it satisfies
the PBW condition. In the case when all $q_{ij}=1$, these are the Drinfeld orbifold algebras studied by ~[6].

\end{section}
\begin{section}{Necessary and sufficient conditions}

For each $g\in G$, let $\kappa_g: V\times V\rightarrow \k \oplus V$ be the function determined by the
condition
\[
\kappa(v,w) = \sum_{g \in G} \kappa_g(v,w)t_g \qquad \text{ for all }  v, w \in V.
\]
Furthermore, let $\kappa_g^C: V\times V\rightarrow \k$ 
and $\kappa_g^L: V\times V\rightarrow V$ be the functions determined by the condition
\[
\kappa_g(v,w) = \kappa_g^C(v,w) + \kappa_g^L(v,w) \qquad \text{ for all }  v, w \in V
\]
where $\kappa_g^C$ and $\kappa_g^L$ are constant and linear parts of $\kappa_g$.
The condition $\kappa(v_i,v_j) = -q_{ij}\kappa(v_j,v_i)$ 
implies that $\kappa_g^C(v_i,v_j) = -q_{ij}\kappa_g^C(v_j,v_i)$ 
and $\kappa_g^L(v_i,v_j) = -q_{ij}\kappa_g^L(v_j,v_i)$ for each $g \in G$.

For each group element $g\in G$, let $g^j_i$ denote the
scalar determined by the equation 
\[
   \lexp{g}{v_j} = \sum_{i=1}^n g_i^j v_i.
\]
Define the {\bf quantum $(i,j,k,l)$-minor determinant} of $g$ as 
\[
    \ddet_{ijkl} (g) := g^j_lg^i_k - q_{ji} g^i_lg^j_k.
\]

The following lemma will be used in the proof of the Theorem 2.2 below.

\begin{lemma} 
\label{relation on det}
Let $g \in G$. We have:
\begin{enumerate}
\item[(i)] $q_{lk} \ddet_{ijkl}(g) = - \ddet_{ijlk}(g)$ for all $i,j,k,l$.
\item[(ii)] For each $i,j$, if $q_{ij}\neq 1$, then $g^i_kg^j_k=0$ for all $k$.
\end{enumerate}
\end{lemma}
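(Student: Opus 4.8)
The crucial ingredient is that $g$ acts on $S_{\bf q}(V)$ by algebra automorphisms, so that the identity
\[
\lexp{g}{v_i}\,\lexp{g}{v_j} \;=\; q_{ij}\,\lexp{g}{v_j}\,\lexp{g}{v_i}
\]
holds in $S_{\bf q}(V)$ for all $1\le i,j\le n$. The plan is to expand both sides using $\lexp{g}{v_m}=\sum_p g^m_p v_p$, rewrite everything in the standard monomial basis $\{v_kv_l\mid k\le l\}$ of the degree two part of $S_{\bf q}(V)$ (replacing each $v_lv_k$ with $l>k$ by $q_{lk}v_kv_l$), and then compare coefficients of the basis monomials. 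A short computation gives
\[
\lexp{g}{v_i}\,\lexp{g}{v_j}
   \;=\; \sum_{k<l}\bigl(g^i_kg^j_l+q_{lk}\,g^i_lg^j_k\bigr)v_kv_l \;+\; \sum_k g^i_kg^j_k\,v_k^2,
\]
with the expansion of $q_{ij}\,\lexp{g}{v_j}\,\lexp{g}{v_i}$ obtained by interchanging $i$ and $j$ and multiplying by $q_{ij}$.

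I would first read off part (ii): comparing coefficients of $v_k^2$ (and recalling that the scalars $g^m_p$ commute) gives $g^i_kg^j_k=q_{ij}\,g^i_kg^j_k$, hence $(1-q_{ij})\,g^i_kg^j_k=0$, so $g^i_kg^j_k=0$ whenever $q_{ij}\ne 1$. Next, comparing coefficients of $v_kv_l$ for $k\ne l$ yields
\[
g^i_kg^j_l\,(1-q_{ij}q_{lk}) \;=\; g^i_lg^j_k\,(q_{ij}-q_{lk}); \tag{$\star$}
\]
one checks (using $q_{kl}=q_{lk}^{-1}$) that $(\star)$ is unchanged under $k\leftrightarrow l$, so it is valid for all $k\ne l$.

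For part (i), expanding the definition of the quantum minor determinant and using $q_{ji}=q_{ij}^{-1}$ gives
\[
q_{lk}\,\ddet_{ijkl}(g)+\ddet_{ijlk}(g) \;=\; g^i_kg^j_l\,(q_{lk}-q_{ji})+g^i_lg^j_k\,(1-q_{lk}q_{ji}),
\]
and multiplying $(\star)$ through by $q_{ji}=q_{ij}^{-1}$ shows that the right-hand side vanishes when $k\ne l$. When $k=l$ one has $q_{kk}=1$ and $\ddet_{ijkk}(g)=(1-q_{ji})\,g^i_kg^j_k$, which is zero --- trivially if $q_{ij}=1$, and by part (ii) if $q_{ij}\ne 1$ --- so $q_{kk}\ddet_{ijkk}(g)=0=-\ddet_{ijkk}(g)$, and (i) holds there too.

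The whole argument is elementary; the only points that need care are the index relabelling when passing to the monomial basis of $S_{\bf q}(V)$ in degree two (it is easy to misplace the scalars $q_{lk}$ coming from $v_lv_k=q_{lk}v_kv_l$), and the observation that the diagonal case $k=l$ of part (i) is not a formal identity in the entries $g^m_p$ but genuinely depends on part (ii), i.e.\ on the hypothesis that $g$ acts by an automorphism of $S_{\bf q}(V)$.
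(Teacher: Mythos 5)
Your proof is correct. All the key steps check out: the degree-two component of $S_{\bf q}(V)$ does have $\{v_kv_l \mid k\le l\}$ as a basis (this is the PBW property of the quantum symmetric algebra itself), so comparing coefficients in $\lexp{g}{v_i}\,\lexp{g}{v_j}=q_{ij}\,\lexp{g}{v_j}\,\lexp{g}{v_i}$ is legitimate; the diagonal coefficients give (ii); the off-diagonal relation $(\star)$ is indeed invariant under $k\leftrightarrow l$ (it gets multiplied by the nonzero scalar $-q_{lk}$), and multiplying it by $q_{ji}$ gives exactly the vanishing of $q_{lk}\ddet_{ijkl}(g)+\ddet_{ijlk}(g)$ for $k\ne l$; and your handling of the case $k=l$, where the identity is not formal in the entries $g^m_p$ but follows from (ii) (or trivially when $q_{ij}=1$), is a genuine point of care, since otherwise one would only get $2\ddet_{ijkk}(g)=0$.

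The comparison with the paper is a bit lopsided: the paper gives no argument at all for this lemma, simply citing Lemma 3.2 and Corollary 3.6 of Levandovskyy--Shepler [4]. Your write-up is therefore a self-contained replacement for that citation, and the route you take --- expanding the $g$-image of the defining quadratic relation in the standard monomial basis and reading off coefficients --- is the natural one and is essentially the mechanism behind the cited results, which likewise rest on the hypothesis that $G$ acts on $S_{\bf q}(V)$ by algebra automorphisms. The only thing to flag explicitly is that this hypothesis (the linear action of $G$ on $V$ extends to $S_{\bf q}(V)$) is exactly where the content lies; your proof makes that dependence visible, which the bare citation in the paper does not.
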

\begin{proof}
Refer Lemma 3.2 and Corollary 3.6 in [4].
\end{proof}

In the proof of the following theorem, we will assume that the reader is
familiar with G.~Bergman's 1978 paper on the Diamond Lemma [1]. We can also approach the following proof using techniques of Braverman and Gaitsgory [2].

\begin{theorem}
The algebra $\mH_{\mathbf{q}, \kappa}$ is a quantum Drinfeld orbifold algebra if and only if
the following conditions hold:
\begin{enumerate}
\item For all $g,h \in G$ and $1 \leq i < j \leq n$,
\[
\kappa_g^C(v_j, v_i) = \sum_{k < l} \ddet_{ijkl}(h) \kappa_{hgh^{-1}}^C(v_l,v_k)
\quad \text{ and } \quad 
\lexp{h}{\left(\kappa_g^L(v_j, v_i)\right)} = \sum_{k < l} \ddet_{ijkl}(h) \kappa_{hgh^{-1}}^L(v_l,v_k).
\]
\text{For all distinct }$i,j,k$ \text{and for all} $g\in G$,\\
\item $q_{ji}q_{ki}v_i\kappa_g^L(v_k,v_j) - \kappa_g^L(v_k,v_j)\lexp{g}v_i - q_{kj}v_j\kappa_g^L(v_k,v_i) \\ \hspace*{1cm} + q_{ji}\kappa_g^L(v_k,v_i)\lexp{g}v_j + v_k\kappa_g^L(v_j,v_i) - q_{ki}q_{kj}\kappa_g^L(v_j,v_i) \lexp{g}v_k =0$\\\\
\item $\ds\sum_{h\in G}\{q_{ij}q_{ik}\kappa_{gh^{-1}}^L(\kappa_h^L(v_j,v_k),\lexp{h}v_i) - \kappa_{gh^{-1}}^L(v_i,\kappa_h^L(v_j,v_k)) +
q_{ik}q_{jk}\kappa_{gh^{-1}}^L(\kappa_h^L(v_k,v_i),\lexp{h}v_j)\\ \hspace*{1cm} - q_{ij}q_{ik}\kappa_{gh^{-1}}^L(v_j,\kappa_h^L(v_k,v_i))+ \kappa_{gh^{-1}}^L(\kappa_h^L (v_i,v_j), \lexp{h}v_k) - q_{ik}q_{jk}\kappa_{gh^{-1}}^L(v_k,\kappa_h^L (v_i,v_j))\} \\\\
=2\{\kappa_g^C(v_j,v_k)(v_i-q_{ij}q_{ik}\lexp{g}v_i)+\kappa_g^C(v_k,v_i)(q_{ij}q_{ik}v_j-q_{ik}q_{jk}\lexp{g}v_j)+\kappa_g^C(v_i,v_j)(q_{ik}q_{jk}v_k-\lexp{g}v_k)\}$\\\\
\item $\ds\sum_{h\in G}\{q_{ij}q_{ik}\kappa_{gh^{-1}}^C(\kappa_h^L(v_j,v_k),\lexp{h}v_i) - \kappa_{gh^{-1}}^C(v_i,\kappa_h^L(v_j,v_k)) +
q_{ik}q_{jk}\kappa_{gh^{-1}}^C(\kappa_h^L(v_k,v_i),\lexp{h}v_j)\\ \hspace*{0.5cm} - q_{ij}q_{ik}\kappa_{gh^{-1}}^C(v_j,\kappa_h^L(v_k,v_i))+ \kappa_{gh^{-1}}^C(\kappa_h^L (v_i,v_j), \lexp{h}v_k) - q_{ik}q_{jk}\kappa_{gh^{-1}}^C(v_k,\kappa_h^L (v_i,v_j))\}=0$\\
\end{enumerate}
\end{theorem}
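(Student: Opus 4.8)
The plan is to apply Bergman's Diamond Lemma to the presentation of $\mH_{\mathbf{q},\kappa}$ with generators $\{v_1,\dots,v_n\}\cup\{t_g\mid g\in G\}$. First I would fix a monomial order: declare $v_n > v_{n-1} > \cdots > v_1 > t_g$ (for all $g$), and use the degree-lexicographic order on words in the free algebra $\k\langle v_1,\dots,v_n,t_g\rangle$, so that the leading term of each defining relation is the ``descending'' quadratic monomial. The reduction system consists of three families: (a) the group relations $t_g t_h \to t_{gh}$, $t_e \to 1$; (b) the straightening relations $t_g v_i \to \sum_k g^i_k\, v_k t_g$ coming from the $G$-action, i.e. rewriting $\lexp{g}{v_i}\,t_g$; and (c) the algebra-deformation relations $v_j v_i \to q_{ij} v_i v_j + \kappa(v_j,v_i)$ for $i<j$, where the right-hand side is expanded using the $\kappa_g$, $\kappa_g^C$, $\kappa_g^L$ notation and the $t_g$'s are pushed to the right. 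By the Diamond Lemma, $\mH_{\mathbf{q},\kappa}$ has the asserted PBW basis $\{v_1^{m_1}\cdots v_n^{m_n} t_g\}$ if and only if every ambiguity (overlap) of this reduction system is resolvable, and this equivalence is exactly the statement of the theorem once the ambiguities are computed.

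Next I would enumerate the overlap ambiguities. The overlaps among (a) alone are the classical associativity/unit ambiguities for $\k G$ and resolve trivially. Overlaps of (b) with (a), namely $t_g t_h v_i$, resolve because the $G$-action is an action (so $\lexp{g}{(\lexp{h}{v_i})} = \lexp{gh}{v_i}$), again automatic. The overlap of (b) with (c) is $t_g v_j v_i$ with $i<j$: reducing $v_j v_i$ first via (c) versus reducing $t_g v_j$ first via (b) must agree. Working this out and collecting coefficients of each $v_k t_{hgh^{-1}}$ (constant part) and $v_lv_k t_{hgh^{-1}}$ (linear part) yields precisely condition (1) — here the quantum minor determinants $\ddet_{ijkl}(h)$ appear as the coefficients arising when a product $(\lexp{h}{v_j})(\lexp{h}{v_i})$ is straightened back into ascending order using (c), and Lemma~\ref{relation on det}(i) is what guarantees the two indexing conventions ($k<l$ versus all $k,l$) match up. Finally, the overlap of (c) with itself is the triple $v_k v_j v_i$ with $i<j<k$: reducing the left pair $v_k v_j$ first versus the right pair $v_j v_i$ first. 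Pushing everything to the normal form and separating by the ``total degree'' in the $v$'s gives a degree-$2$ component, a degree-$1$ component, and a degree-$0$ component of the resulting identity; these are conditions (2), (3), and (4) respectively (the linear/constant split of the degree-$\le 1$ parts accounting for the separation of (3) from (4), with the sum over $h\in G$ coming from the $\kappa_h^L$ terms feeding back into a second application of (c)).

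For the ``only if'' direction, the PBW property forces each of these ambiguities to be resolvable (since a non-resolvable ambiguity would produce a nonzero element of the ideal that reduces to two distinct normal forms, violating the basis property), so the four conditions are necessary; for the ``if'' direction, the conditions say every ambiguity is resolvable, and since the order is a well-order compatible with the reduction system, the Diamond Lemma gives the normal-form basis, which is condition (2) of the PBW definition, hence (1) as well. Throughout, Lemma~\ref{relation on det} is invoked to rewrite $\ddet_{ijlk}$ in terms of $\ddet_{ijkl}$ and, when some $q_{ij}\neq 1$, to kill cross-terms $g^i_k g^j_k$ that would otherwise clutter the identities; this is what lets the final conditions be stated cleanly with sums only over $k<l$.

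The main obstacle I expect is the bookkeeping in the self-overlap of the deformation relation (c) on $v_k v_j v_i$: because $\kappa$ takes values in $(\k\oplus V)\otimes \k G$, a single reduction step introduces both new $v$'s (which must themselves be re-straightened against the group elements via (b), and re-ordered against other $v$'s via (c), generating the $\kappa_{gh^{-1}}^L(\kappa_h^L(\cdot,\cdot),\cdot)$ nested terms and the sum over $h$) and new $t_g$'s (which must be commuted past the remaining $v$'s). Carefully tracking the quantum scalars $q_{ij}, q_{ik}, q_{jk}$ that accumulate from each transposition — and checking that the two reduction paths produce the same element of the free module, degree by degree in the $v$'s — is the delicate computational heart of the proof; the Braverman--Gaitsgory approach (verifying the Jacobi-type and compatibility conditions on the deformation of $S_{\mathbf q}(V)\#G$) is an alternative that organizes the same content more conceptually, and I would mention it as a cross-check.
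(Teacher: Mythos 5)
Your proposal follows essentially the same route as the paper's own proof: the same generating set $\{v_1,\dots,v_n\}\cup\{t_g\}$, the same three families of reductions, and the same list of overlap ambiguities ($t_gt_ht_k$, $t_gt_hv_i$, $t_hv_jv_i$, $v_kv_jv_i$), with condition (1) extracted from the overlap $t_hv_jv_i$ using Lemma~\ref{relation on det}, and conditions (2)--(4) from the degree $2$, $1$, $0$ components of the self-overlap $v_kv_jv_i$ (the paper defers that last computation to Theorem 3.1 of [6]). One detail you should flip: with your order $v_n>\cdots>v_1>t_g$, the straightening reduction $t_gv_i\to\sum_k g^i_k\,v_kt_g$ replaces a word by strictly larger words of the same length (the first letters compare as $t_g<v_k$), so the reduction system is not compatible with your order and Bergman's hypotheses fail as you stated them; the paper instead makes the group generators largest, $v_1<v_2<\cdots<v_n<t_g$, which restores compatibility and the descending chain condition. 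A second small slip: for $i<j$ the reduction should read $v_jv_i\to q_{ji}v_iv_j+\kappa(v_j,v_i)$, not $q_{ij}v_iv_j+\kappa(v_j,v_i)$, since the defining relation is $v_iv_j=q_{ij}v_jv_i+\kappa(v_i,v_j)$. Neither slip affects the structure of the argument, which otherwise matches the paper's.
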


\begin{proof}
We begin by expressing the algebra $\mH_{\mathbf{q}, \kappa}$
as a quotient of a free associative $\k$-algebra.
Let $X=\{v_1,v_2, \ldots, v_n\} \cup \{t_g \mid g \in G\}$, and 
let $\kX$ be the free associative $\k$-algebra generated by $X$.
Consider the reduction system 
\[
S=\{(t_gv_i, \lexp{g}{v_i}t_g), \, (t_gt_h, t_{gh}), \, 
(v_jv_i, q_{ji}v_iv_j + \kappa(v_j,v_i)) \mid g,h \in G, 1 \leq i < j \leq n\}
\]
for $\kX$. Let $I$ be the ideal of $\kX$ generated by the following elements:
\[
t_gv_i -\lexp{g}{v_i}t_g, \qquad t_gt_h - t_{gh}, \qquad 
v_jv_i-q_{ji}v_iv_j - \kappa(v_j,v_i), \qquad g,h \in G, 1 \leq i < j \leq n.
\]
In what follows, we will use the Diamond Lemma [2] to show that the set 
\[
\{v_1^{m_1}v_2^{m_2}\cdots v_n^{m_n}t_g \mid m_i \geq 0, g \in G\}
\]
is a $\k$-basis
for $\kX/I$ if and only if the four conditions in the statement of the theorem hold. 

Define a partial order $\leq$ on the free semigroup $\X$ as follows: First,
we declare that $v_1 < v_2 < \cdots < v_n < g$ for all $g \in G$, and then we set $A < B$ if 
\begin{enumerate}
\item[(i)] $A$ is of smaller length than $B$, or
\item[(ii)] $A$ and $B$ have the same length but $A$ is less than $B$ relative to
the lexicographic order.
\end{enumerate}
Then $\leq$ is a semigroup partial order on $\X$, compatible with the 
reduction system $S$, and having the descending chain condition. 
Thus, the hypothesis of the Diamond Lemma holds.

Observe that the set $\X_\irr$ of irreducible elements of $\X$ is precisely the
alleged $\k$-basis for $\kX/I$. That is,
\[
\X_\irr = \{v_1^{m_1}v_2^{m_2}\cdots v_n^{m_n}t_g  \mid m_i \geq 0, g \in G\}.
\]

In what follows, we show that all ambiguities of $S$ are
resolvable. The theorem will then follow by the Diamond Lemma.
There are no inclusion ambiguities, but there do exist overlap
ambiguities, and these correspond to the monomials 
\[
t_gt_ht_k, \qquad  t_gt_hv_i, \qquad t_hv_jv_i, \qquad v_kv_jv_i,  
\qquad \text{ where } 1 \leq i < j < k \leq n, \, g,h \in G.
\]

Associativity of the multiplication in $G$
implies that the ambiguity corresponding to the monomial $t_gt_ht_k$ is resolvable.
The equality $\lexp{gh}{v_i} = \lexp{g}{(\lexp{h}{v_i})}$
implies that the ambiguity corresponding to the monomial $t_gt_hv_i$ is resolvable. 
Next, we show that the ambiguity
corresponding to the monomial $t_hv_jv_i$ is resolvable if and only if condition (1) 
in the statement of the theorem holds. Below, we use the symbol ``$\longrightarrow$" 
to indicate that a reduction has been applied. We have
{\tiny
\begin{eqnarray*}
t_hv_jv_i &\longrightarrow& q_{ji}t_hv_iv_j + t_h\kappa(v_j, v_i)\\
&\longrightarrow& q_{ji} \lexp{h}{v_i}\lexp{h}{v_j}t_h + t_h \kappa(v_j,v_i)\\
&=& q_{ji} \left(\sum_{l=1}^n h_l^i v_l \right) \left(\sum_{k=1}^n h_k^j v_k \right)t_h
+ t_h \kappa(v_j,v_i)\\
&=& q_{ji} \sum_{l<k} h_l^ih_k^j v_lv_kt_h + q_{ji}\sum_{k<l} h_l^ih_k^j v_lv_kt_h 
+ q_{ji} \sum_{k=1}^n h_k^ih_k^jv_k^2t_h + t_h \kappa(v_j,v_i)\\
&\longrightarrow& q_{ji} \sum_{l<k} h_l^ih_k^j v_lv_kt_h + 
q_{ji}\sum_{k<l} h_l^ih_k^j q_{lk} v_k v_lt_h 
+ q_{ji} \sum_{k<l}h_l^ih_k^j \kappa(v_l,v_k)t_h 
+ q_{ji} \sum_{k=1}^n h_k^ih_k^jv_k^2t_h + t_h \kappa(v_j,v_i) \\
&\longrightarrow& q_{ji} \sum_{k<l} \left(h_k^ih_l^j + q_{lk}h_l^ih_k^j\right) v_kv_lt_h 
+  q_{ji} \sum_{k=1}^n h_k^ih_k^jv_k^2t_h
+  q_{ji} \sum_{g \in G} \left( \sum_{k<l}h_l^ih_k^j \kappa_g(v_l,v_k)\right)t_{gh} 
+  \sum_{g \in G} t_h\kappa_g(v_j,v_i) t_g\\
&\longrightarrow& q_{ji} \sum_{k<l} \left(h_k^ih_l^j + q_{lk}h_l^ih_k^j\right) v_kv_lt_h 
+  q_{ji} \sum_{k=1}^n h_k^ih_k^jv_k^2t_h
+  q_{ji} \sum_{g \in G} \left( \sum_{k<l}h_l^ih_k^j \kappa_g^C(v_l,v_k)\right)t_{gh} \\
&& +  \; q_{ji} \sum_{g \in G} \left( \sum_{k<l}h_l^ih_k^j \kappa_g^L(v_l,v_k)\right)t_{gh} 
+  \sum_{g \in G} \kappa_g^C(v_j,v_i) t_{hg}
+  \sum_{g \in G} \lexp{h}{\left(\kappa_g^L(v_j,v_i)\right)} t_{hg}\\
&=& q_{ji} \sum_{k<l} \left(h_k^ih_l^j + q_{lk}h_l^ih_k^j\right) v_kv_lt_h 
+  q_{ji} \sum_{k=1}^n h_k^ih_k^jv_k^2t_h
+  \sum_{g \in G} \left( q_{ji} \sum_{k<l}h_l^ih_k^j \kappa_{hgh^{-1}}^C(v_l,v_k) 
+   \kappa_g^C(v_j,v_i)\right) t_{hg}\\
&& + \;  \sum_{g \in G} \left( q_{ji} \sum_{k<l}h_l^ih_k^j \kappa_{hgh^{-1}}^L(v_l,v_k) 
+   \lexp{h}{\left(\kappa_g^L(v_j,v_i)\right)}\right) t_{hg}
\end{eqnarray*}
}
and
{\tiny
\begin{eqnarray*}
t_hv_jv_i &\longrightarrow& \lexp{h}{v_j}\lexp{h}{v_i}t_h\\
&=& \left(\sum_{l=1}^n h_l^j v_l \right) \left(\sum_{k=1}^n h_k^i v_k \right)t_h \\
&=& \sum_{l<k} h_l^jh_k^i v_lv_kt_h + \sum_{k<l} h_l^jh_k^i v_lv_kt_h 
+ \sum_{k=1}^n h_k^jh_k^iv_k^2t_h\\
&\longrightarrow& \sum_{l<k} h_l^jh_k^i v_lv_kt_h + 
\sum_{k<l} q_{lk} h_l^jh_k^i v_kv_lt_h + \sum_{k<l}h_l^jh_k^i \kappa(v_l,v_k)t_h
+ \sum_{k=1}^n h_k^jh_k^iv_k^2t_h \\
&\longrightarrow& \sum_{k<l} \left(h_k^jh_l^i + q_{lk}h_l^jh_k^i\right) v_kv_lt_h 
+ \sum_{k=1}^n h_k^jh_k^iv_k^2t_h 
+ \sum_{g \in G} \left(  \sum_{k<l}h_l^jh_k^i \kappa_g(v_l,v_k) \right) t_{gh}\\
&=& \sum_{k<l} \left(h_k^jh_l^i + q_{lk}h_l^jh_k^i\right) v_kv_lt_h 
+ \sum_{k=1}^n h_k^jh_k^iv_k^2t_h 
+ \sum_{g \in G} \left(  \sum_{k<l}h_l^jh_k^i \kappa_{hgh^{-1}}^C(v_l,v_k) \right) t_{hg}
+ \sum_{g \in G} \left(  \sum_{k<l}h_l^jh_k^i \kappa_{hgh^{-1}}^L(v_l,v_k) \right) t_{hg}.
\end{eqnarray*}
}

The last expressions in the previous two computations are equal if and only if

\begin{enumerate}
\item[(a)] $q_{ji}h_k^ih_l^j + q_{ji}q_{lk}h_l^ih_k^j = h_k^jh_l^i + q_{lk}h_l^jh_k^i$ for all $k<l$,
\item[(b)] $q_{ji} h_k^ih_k^j = h_k^ih_k^j$ for all $k$, and
\item[(c)] for all $g \in G$, we have
\[
\ds  q_{ji} \sum_{k<l}h_l^ih_k^j \kappa_{hgh^{-1}}^C(v_l,v_k) 
+   \kappa_g^C(v_j,v_i)\\
= \sum_{k<l}h_l^jh_k^i \kappa_{hgh^{-1}}^C(v_l,v_k)
\]
and
\[
\ds  q_{ji} \sum_{k<l}h_l^ih_k^j \kappa_{hgh^{-1}}^L(v_l,v_k) 
+   \lexp{h}{\left(\kappa_g^L(v_j,v_i)\right)}\\
= \sum_{k<l}h_l^jh_k^i \kappa_{hgh^{-1}}^L(v_l,v_k).
\]
\end{enumerate}

That (a) and (b) hold follows from part (i) and part (ii) of Lemma~\ref{relation on det},
respectively. The equations in (c) are equivalent to the equations in 
condition (1) in the statement of the theorem.

Finally, we show that the ambiguity corresponding to the monomial $v_kv_jv_i$ is resolvable
if and only if conditions (2)-(4) in the statement of the theorem hold.
{\tiny
\begin{eqnarray*}
v_kv_jv_i &\longrightarrow& q_{ji} v_kv_iv_j + v_k \kappa(v_j,v_i) \\
&=& q_{ji} v_kv_iv_j + \sum_{g \in G} \left( v_k \kappa_g^C(v_j,v_i) t_g + 
v_k \kappa_g^L(v_j,v_i) t_g \right) \\
&\longrightarrow& q_{ji} (q_{ki} v_iv_kv_j + \kappa(v_k,v_i) v_j) + 
\sum_{g \in G} \left( \kappa_g^C(v_j,v_i) v_k t_g + v_k \kappa_g^L(v_j,v_i) t_g \right) \\
&=& q_{ji} q_{ki} v_iv_kv_j + 
q_{ji} \sum_{g \in G} \left( \kappa_g^C(v_k,v_i) t_g v_j + \kappa_g^L(v_k,v_i) t_g v_j \right) + 
\sum_{g \in G} \left( \kappa_g^C(v_j,v_i) v_k t_g + v_k \kappa_g^L(v_j,v_i) t_g \right) \\
&\longrightarrow& q_{ji} q_{ki} v_iv_kv_j + 
q_{ji} \sum_{g \in G} \left( \kappa_g^C(v_k,v_i) \lexp{g}{v_j} t_g + \kappa_g^L(v_k,v_i) \lexp{g}{v_j} t_g \right) + 
\sum_{g \in G} \left( \kappa_g^C(v_j,v_i) v_k t_g + v_k \kappa_g^L(v_j,v_i) t_g \right) \\
&\longrightarrow& q_{ji} q_{ki} (q_{kj} v_iv_jv_k + v_i \kappa(v_k,v_j)) + 
q_{ji} \sum_{g \in G} \left( \kappa_g^C(v_k,v_i) \lexp{g}{v_j} t_g + \kappa_g^L(v_k,v_i) \lexp{g}{v_j} t_g \right) \\
&& + \sum_{g \in G} \left( \kappa_g^C(v_j,v_i) v_k t_g + v_k \kappa_g^L(v_j,v_i) t_g \right) \\
&=& q_{ji} q_{ki} q_{kj} v_iv_jv_k + q_{ji} q_{ki} \sum_{g \in G} \left( v_i \kappa_g^C(v_k,v_j) t_g + v_i \kappa_g^L(v_k,v_j) t_g \right) + 
q_{ji} \sum_{g \in G} \left( \kappa_g^C(v_k,v_i) \lexp{g}{v_j} t_g + \kappa_g^L(v_k,v_i) \lexp{g}{v_j} t_g \right) \\ 
&& + \sum_{g \in G} \left( \kappa_g^C(v_j,v_i) v_k t_g + v_k \kappa_g^L(v_j,v_i) t_g \right) \\
&=& q_{ji} q_{ki} q_{kj} v_iv_jv_k + q_{ji} q_{ki} \sum_{g \in G} \left( \kappa_g^C(v_k,v_j) v_i t_g +
v_i \kappa_g^L(v_k,v_j) t_g \right) + 
q_{ji} \sum_{g \in G} \left( \kappa_g^C(v_k,v_i) \lexp{g}{v_j} t_g + \kappa_g^L(v_k,v_i) \lexp{g}{v_j} t_g \right) \\ 
&& + \sum_{g \in G} \left( \kappa_g^C(v_j,v_i) v_k t_g + v_k \kappa_g^L(v_j,v_i) t_g \right)\\\\
\normalsize {\text{and}}\\\\
v_kv_jv_i&\longrightarrow& q_{kj} v_jv_kv_i + \kappa(v_k,v_j) v_i \\
&=& q_{kj} v_jv_kv_i + \sum_{g \in G} \left( \kappa_g^C(v_k,v_j) t_g v_i + \kappa_g^L(v_k,v_j) t_g v_i \right) \\
&\longrightarrow& q_{kj} v_jv_kv_i + \sum_{g \in G} \left( \kappa_g^C(v_k,v_j) \lexp{g}{v_i} t_g + \kappa_g^L(v_k,v_j) \lexp{g}{v_i} t_g \right) \\
&\longrightarrow& q_{kj} (q_{ki} v_jv_iv_k + v_j \kappa(v_k,v_i)) + 
\sum_{g \in G} \left( \kappa_g^C(v_k,v_j) \lexp{g}{v_i} t_g + \kappa_g^L(v_k,v_j) \lexp{g}{v_i} t_g \right)
\end{eqnarray*}
}
{\tiny
\begin{eqnarray*}
v_kv_jv_i&=& q_{kj} q_{ki} v_jv_iv_k + 
q_{kj} \sum_{g \in G} \left( v_j \kappa_g^C(v_k,v_i) t_g + v_j \kappa_g^L(v_k,v_i) t_g \right) + 
\sum_{g \in G} \left( \kappa_g^C(v_k,v_j) \lexp{g}{v_i} t_g + \kappa_g^L(v_k,v_j) \lexp{g}{v_i} t_g \right) \\
&\longrightarrow& q_{kj} q_{ki} (q_{ji} v_iv_jv_k + \kappa(v_j,v_i) v_k) + 
q_{kj} \sum_{g \in G} \left( \kappa_g^C(v_k,v_i) v_j t_g + v_j \kappa_g^L(v_k,v_i) t_g \right) \\ 
&& + \sum_{g \in G} \left( \kappa_g^C(v_k,v_j) \lexp{g}{v_i} t_g + \kappa_g^L(v_k,v_j) \lexp{g}{v_i} t_g \right) \\
&=& q_{kj} q_{ki} q_{ji} v_iv_jv_k + 
q_{kj} q_{ki} \sum_{g \in G} \left( \kappa_g^C(v_j,v_i) t_g v_k + \kappa_g^L(v_j,v_i) t_g v_k \right) + 
q_{kj} \sum_{g \in G} \left( \kappa_g^C(v_k,v_i) v_j t_g + v_j \kappa_g^L(v_k,v_i) t_g \right) \\ 
&& + \sum_{g \in G} \left( \kappa_g^C(v_k,v_j) \lexp{g}{v_i} t_g + \kappa_g^L(v_k,v_j) \lexp{g}{v_i} t_g \right) \\
&\longrightarrow& q_{kj} q_{ki} q_{ji} v_iv_jv_k + 
q_{kj} q_{ki} \sum_{g \in G} \left( \kappa_g^C(v_j,v_i) \lexp{g}{v_k} t_g + \kappa_g^L(v_j,v_i) \lexp{g}{v_k} t_g \right) + 
q_{kj} \sum_{g \in G} \left( \kappa_g^C(v_k,v_i) v_j t_g + v_j \kappa_g^L(v_k,v_i) t_g \right) \\ 
&& + \sum_{g \in G} \left( \kappa_g^C(v_k,v_j) \lexp{g}{v_i} t_g + \kappa_g^L(v_k,v_j) \lexp{g}{v_i} t_g \right).\\\\
\end{eqnarray*}
}
The last expressions in the previous two computations are equal if and only if\\
{\tiny
\begin{eqnarray*}
\ds\sum_{g\in G}\{\kappa_g^C(v_k,v_j)(q_{ji}q_{ki}v_i-\lexp{g}v_i)+\kappa_g^C(v_i,v_k)
(q_{kj}q_{ki}v_j-q_{ji}q_{ki}\lexp{g}v_j)+\kappa_g^C(v_j,v_i)(v_k-q_{kj}q_{ki}\lexp{g}v_k)+ q_{ji}q_{ki}v_i\kappa_g^L(v_k,v_j)\\-\kappa_g^L(v_k,v_j)\lexp{g}v_i+q_{kj}q_{ki}v_j
\kappa_g^L(v_i,v_k)-q_{ji}q_{ki}\kappa_g^L(v_i,v_k)\lexp{g}v_j+v_k\kappa_g^L(v_j,v_i)-q_{kj}q_{ki}
\kappa_g^L(v_j,v_i)\lexp{g}v_k\}=0\\
\end{eqnarray*}
}
\noindent Write $\lexp{g}v_a=\ds\sum_l g_l^av_l$ and $\kappa_g^L(v_a,v_b)=\ds\sum_m C_m^{g,a,b}v_m$ for some $g_l^a$ and $C_m^{g,a,b}$ in $\k$. Now similar calculations as in proof of Theorem 3.1 [6] using the Diamond lemma will lead us to conditions (2)-(4) in the statement of the theorem.\\
\end{proof}
\vspace*{.25cm}

\noindent{\bf Acknowledgement:} The author would like to thank Dr. Deepak Naidu and Dr. Jeanette Shakalli for their initial help and Dr. Anne Shepler for her helpful comments. In addition, the author would like to thank his doctoral thesis advisor Dr. Sarah Witherspoon.

\end{section}



\end{document}